\numberwithin{equation}{section}
\def\ca{{\mathcal A}}
\def\cb{{\mathcal B}}
\def\cc{{\mathcal C}}
\def\cl{{\mathcal L}}
\def\car{{\mathcal R}}
\def\cas{{\mathcal S}}
\def\ct{{\mathcal T}}
\def\bc{{\mathbb C}}
\def\bd{{\mathbb D}}
\def\baf{{\mathbb F}}
\def\bn{{\mathbb N}}
\def\bt{{\mathbb T}}
\def\bz{{\mathbb Z}}
\def\a{\alpha}
\def\eps{\varepsilon}
\def\d{\delta}
\def\j{\iota}
\def\l{\lambda}
\def\x{\xi}
\def\s{\sigma}
\def\t{\tau}
\def\f{\varphi}
\def\c{\chi}
\theoremstyle{plain}
\newtheorem{lemma}{Lemma}[section]
\newtheorem{proposition}[lemma]{Proposition}
\newtheorem{theorem}[lemma]{Theorem}
\theoremstyle{definition}
\newtheorem{definition}[lemma]{Definition}
\theoremstyle{remark}
\begin{document}

\title[Unbounded   Expectations]{\textsc{Unbounded  Expectations to some  von Neumann Algebras  }}

\begin{center}
Dedicated to the memory of Uffe V. Haagerup
\end{center}

\author[E.~Christensen]{Erik Christensen}
\address{\hskip-\parindent
Erik Christensen, Mathematics Institute, University of Copenhagen, Copenhagen, Demark.}
\email{echris@math.ku.dk}
\date{\today}
\subjclass[2010]{ Primary: 46L10, 46L55. Secondary: 43A35, 46L07.}
\keywords{von Neumann algebra, discrete group, crossed product, conditional expectation, Hadamard multiplier, Schur product, positive definite function} 

\begin{abstract}
For any injective von Neumann algebra $\car$ on a Hilbert space $H$ and any discrete, countable group $G,$ which acts by *-automorphisms $\a_g,$ on $\car$  we construct an idempotent mapping of an ultra-weakly dense subspace of $B(H \otimes \ell_2(G))$ onto    the crossed product von Neumann algebra $\cl(\car, \a, G),$ such that it is $\car-$bi-modular and  satisfies some nice relations with respect to positivity.  In the  case of an amenable group, our unbounded expectation turns into a usual  conditional expectation of norm 1.
 
\end{abstract}
\maketitle

\section{Introduction} We work in the setting of a von Neumann algebra  $\cas$ on a Hilbert space $K,$ such that $\cas$ is isomorphic to  a von Neumann algebraic reduced crossed product $\car \rtimes_\a G$ of an injective von Neumann algebra $\car$ by a countable discrete group $G.$  We give examples of unital completely positive mappings $\s: B(K) \to \cas$ which have lots of eigen vectors and in several ways have properties which  conditional expectations possesses. This has given us the courage to suggest the introduction of a new concept, we name {\em unbounded expectations.} 

The idea comes from the description of elements in a crossed product von Neumann algebra as generalized Fourier series. In our setting from above the crossed product algebra $\cas$ on $K$  is generated by an injective von Neumann algebra $\car$ on $K$ and a unitary  representation $g \to u_g$ of a countable discrete group $G$ on $K$ such that the mappings $\car \ni r \to \a_g(r) := u_g r u_g^*$  are *-automorphisms of $\car.$ It is well known that any element $s$ in $\cas$ has a generalized Fourier series $s \sim \sum_g u_gr_g$ with coefficients $r_g $ in $\car.$
The sum converges in some topology which we describe below, but just as in the ordinary case with the group $\bz$ and Fourier series for essentially bounded measurable functions on the interval $[-\pi, \pi],$ the meaning of convergence demands some background knowledge to be described. In the classical case with an essentially  bounded measurable  $2\pi$  periodic function $f,$ its   Fourier coefficients $f_n$ are determined by 
$$ f_n = \frac{1}{2\pi}  \int_{-\pi}^\pi e^{-in\theta} f(\theta)d\theta \text{ and } f \sim \sum_{ n \in \bz} f_n e^{in\theta}.$$ 
For the  $\cas,$ we study, there exists a faithful, unital, normal and  completely positive conditional expectation named Diag of $\cas $ onto $\car$ such that, with respect to a certain topology, the sum below converges, and we will use the symbol "$\sim$" to express this convergence.  
$$ \forall s \in \cas: \quad s \sim \sum_{g \in G} u_g\mathrm{Diag}(u_g^*s). $$ 
This is a generalization without restrictions of the classical result to our setting. In \cite{C2} we studied this generalization in the C*-algebraic setting, which in the case of periodic functions means the continuous functions, and we saw that the extension from the classical formula is more than just a notational coincidence. 
The same is true in the von Neumann case. In this article we will focus on the case where the von Neumann algebra $\car$ is injective. There are several reasons for this, and we will mention that the  the simplest  case with $\car = \bc I_K$ leaves us with the study of the von Neumann algebra $\cl(G)$ generated by the left regular representation of the group, and this is interesting enough for most  of us. On the other hand, many of the examples of von Neumann algebra factors  with certain properties are constructed as crossed products of an abelian von Neumann algebra by a discrete group, so they do fit into our setting too.

This article is based on an observation which tells, that in the setup described above, there exists a pair $(\c, \s)$ consisting of a positive definite and strictly positive  function $\c$ on the group and a  unital, completely positive and $\car-$bimodular  mapping  $\s: B(K) \to \cas,$  such that for any $g$ in  $G$ and any $r$ in $\car$  we have  $\s(u_gr) = \c(g)u_gr.$ Then all  the operators $u_gr$ are eigenvectors for $\s,$ with corresponding eigen values $\c(g),$ so  the observation may also be formulated as follows. There exists a pair $(\c, \s) $ such that $\c$ is a positive definite and strictly positive function on $G$ and $\s: B(K) \to \cas$ is a completely positive extension of the Hadamard multiplier $M^\c$ acting on $\cas.$ Let us then define  an ultra-weakly dense subspace  $\bd$ of $B(K)$ as the set of operators $x$ in $B(K)$ for which the sum $$ \sum_{g \in G}\c(g)^{-1} u_g \mathrm{Diag}\big(u_g^*\s(x)\big) $$ represents a bounded operator. We can then define an unbounded idempotent $\Pi$  of $\bd$ onto $\cas$ by 
$$ \forall d \in \bd : \quad \Pi(d) \sim \sum_{g \in G} u_g   \c(g)^{-1}  \mathrm{Diag}\big(u_g^*\s(x)\big) .$$ 

We will then call the pair $(\c, \s)$ for an unbounded expectation of $\bd$ onto $\cas.$ 
We show that unbounded expectations exist whenever the group $G$ is discrete and countable. For a discrete,  countable and  amenable group our construction of  an unbounded expectation $(\c, \s),$ gives a pair which  satisfies $\c(g) = 1 $ for all $g$ and then $\s $ is an ordinary completely positive conditional expectation from $B(K)$ onto $\cas. $  
In the classical case where the group  is $\bz$ and $\car$ the scalars,  our method recovers results on the convergence of Cesàro summation.  We make explicit computations for the free non abelian groups using methods from Haagerup's article \cite{H1}, and in this way we get some stronger results for these groups than our general results for discrete countable and finitely generated groups. 

The results leave the strong impression that amongst the unbounded expectations $(\c, \s),$ the one, with an associated positive definite  and strictly positive function $\c$ which is  point-wise the largest possible is the {\em best.} Recall that in the amenable case, $\c(g)=1$ for all $g$ is obtainable. 

The investigations behind this article are inspired by Haagerup's fundamental work \cite{H1}, and we have dedicated the work to his memory. 

We had the thought that the  unbounded expectations might have connections to Lance's and Kirchberg's works on the weak expectation property, or relations to Pisier's work on tensor products, or to Connes' embedding problem. We have  tried  to relate our findings to works by  Chatterji \cite{CI}, Kirchberg \cite{Ki}, Lance \cite{La}, Ozawa \cite{O1, O2} and Pisier \cite{P1},  
but we have not found any new insights into the fundamental problems dealt with by these authors.

 We recover a classical result by Cesàro on uniform convergence of the Cesàro sums for Fourier series of  continuous $2\pi$ periodic functions, and this is  related to  summation results by Toeplitz \cite{T} and Fejér \cite{Fe}.

The concept {\em rapid decay, } which Chatterji describes the properties of in \cite{CI} and which  plays an important role in Haagerups work \cite{H1}, seemed to have some impact on our investigation for some time, but in the end, we did not have to use any results depending on this concept. 
 
 Lance gives a characterization of nuclearity for a C*-algebra $\ca$  based on the existence of a net of finite rank, completely positive and contractive mappings converging norm point-wise to the identity mapping on $\ca.$ Our construction yields such a net for any group C*-algebra of an amenable group, but no new results for amenable groups are obtained. Lance's construction leads to the concept named {\em weak expectation, }  and this concept has been studied by especially Kirchberg \cite{Ki}, Ozawa \cite{O1, O2} and Pisier \cite{P1} because it relates to fundamental questions on tensor products of C*-algebras and also to Conne's embedding problem. It may be that our unbounded expectations could play a role in a future study, as weak unbounded expectations, but so far we have had no luck in pursuing such a track.

  As a final remark, we would like to mention that we have been worried by a statement, which we have heard M. Gromov has made: {\em "a theorem which is valid for all discrete groups is either trivial or false". } We think that the results presented here are true, and not for obvious reasons, but they  may not be extendible to discrete groups, which are not countable, so Gromov may still be right.

\section{ The algebra $\cl(\car, \a, G)$ and the Hadamard product}
\label{sec2}
We have for some time studied Schur block products on block matrices of bounded operators and Hadamard products in group C*-algebras \cite{C1, C2}. Our interest was inspired by an interest in the concept {\em spectral triple,} which is a fundamental concept in Connes' noncommutative geometry. We have lately, \cite{C2}, realized that the Schur block product may be seen as a special case of  the convolution or Hadamard product in a crossed product of a C*-algebra by a discrete group.
 When the elements in a crossed product of an algebra by a discrete group are described, via a unitary representation $g \to u_g,$ as formal sums $x \sim \sum_g u_gx_g,$ $y \sim \sum_g u_gy_g,$   the Hadamard product is given by the sum $x\star_h y: \sim \sum_g u_gx_gy_g.$ 
 The description in \cite{C2} of $M_n(B(H))$ as a crossed product shows that the diagonals  of the block matrices of $M_n(B(H))$ play a fundamental role in the study of properties of Schur block products. 
 In \cite{C2} the algebra $ M_n(B(H))$ is described as a crossed product of the diagonal algebra $\ell_\infty( \{0, \dots, n-1\}, B(H))$ with the natural action by cyclic permutations  of the cyclic cyclic group $C_n$ of order $n.$ Here the unitary named the forward shift $S$ in $M_n(\bc)$ is given by the matrix $s_{i,j} := \d_{i,j+1}$  and a unitary representation of $C_n$  on $H \otimes \bc^n$ is given by $i \to (I_H \otimes S^i).$ The unitaries $(I_H \otimes S^i)$ implement an action $\a_i$ of $C_n$ on the diagonal algebra such that $$ \a_i( \sum_j d_{(j,j)} \otimes e_{(j,j)} ) = \sum_j d_{(j-i,j-i)} \otimes e_{(j,j)}.$$  Then  any operator $A$ in $M_n(B(H)) $ may be  written as a member of the crossed product $C^*_r(\ell_\infty(\{0, 1, \dots, n-1\}, B(H)), C_n) $ by the formula 
 \begin{equation} \label{FouMn}
A = \sum_{i =0 }^{n-1}(I_H \otimes S^i)\text{Diag}\big((I_H \otimes S^{-i})A\big).
\end{equation}  
In the case when the cyclic group $C_n$ is replaced by the group $\bz$ and the algebra $M_n(B(H))$ is replaced by the group C*-algebra C$^*_r(\bz),$ which is isomorphic to the algebra C$(\bt)$ - the continuous functions on the unit circle - the formula (\ref{FouMn}) becomes the usual expression for the Fourier series of a continuous $2\pi-$periodic function $f( \theta)$ as \begin{equation} \label{FouZ} 
a_n : = \frac{1}{2\pi}\int_{-\pi}^{\pi} e^{-in\theta}f(\theta) d\theta, \quad f(\theta ) \sim \sum_{n \in \bz} a_n e^{in\theta}.
\end{equation} 
We would like to make a short historical remark,  which is based on Horn's survey \cite{Ho}.  Hadamard studied  in \cite{Ha} the product of two  Laurent series which is obtained as the series with coefficients equal to the product of the coefficients of the given series.  Schur studied in \cite{Sc} element-wise products of scalar matrices, and realized, amongst other results, that the Schur product of 2 positive semi definite matrices is positive semi definite, a result which plays an important role in both pure and applied mathematics. When reading Schur's article we noted that he mentions the ordinary matrix product too, and he uses the German word {\em faltung,} which means convolution for this product. In  a situation with a duality  present, it may be a matter of taste to decide which aspect is the primal and which is the dual. We have chosen to use the  term { \em Schur block product } when no group is present, and to say {\em Hadamard product }  when we are dealing with the {\em convolution product } inside a reduced crossed product of a C*-algebra by a discrete group.   

A word of warning. In the classical formula (\ref{FouZ}) the coefficients $a_n$ are written  to the left of the unitary $e^{in\theta}$ whereas in the operator form of the Fourier series presented in (\ref{FouMn}), the coefficients are written to the right of the unitary. This is so, and will remain the same in the rest of this article, because we find that this fits most easily with the traditional way of writing the polar decomposition, and it also  seems to fit best with the use of the forward shift in the description of $M_n(B(H))$ as a crossed product C*-algebra. 
In \cite{C2} we discussed several aspects of the Hadamard product in a reduced crossed product C*-algebra. 
This section is in many ways a continuation of this, but here we will work in the setting of an injective von Neumann algebra $\car$ on a Hilbert space $H$ with an action by *-automorphisms  $\a_g$ of a discrete group $G.$ 
The reduced crossed product von Neumann algebra $\cl(\car, \a,G)$ is constructed as in \cite{KR} Definition 13.1.1 with some notational differences, so $(\d_g)_{(g \in G)} $ denotes the canonical orthonormal basis for $\ell_2(G). $ The  unitary left-translations $\l_g$ satisfy $\l_g \d_h = \d_{gh}$ and the canonical matrix units in $B(\ell_2(G))$ are denoted $(e_{(g,h)})_{(g, h \in G)}.$ 
Then we define a unitary representation $ G \ni g \to L_g$ of  $G$ on  $H \otimes \ell_2(G)$ by $L_g := I_H \otimes \l_g, $ and we define a normal faithful representation $\Psi$ of $\car$ on the same Hilbert space by $\Psi(r) : =  \sum_{g \in G} \a_{g^{-1}}(r) \otimes e_{(g,g)}.$  
The von Neumann crossed product $\cl(\car, \a, G)$ is the von Neumann algebra which is obtained as the ultra-weak closure of the linear span of all the operators in the set $\{L_g\Psi(r) : g \in G, r \in \car\}.$ 
As in the definitions above we will freely use a block matrix representation of bounded operators on $H \otimes \ell_2(G)$ by writing them as infinite sums of elementary tensors such as $x = \sum_{g,h} x_{(g,h)} \otimes e_{(g,h)}.$ This structure is really that of the vector space of all functions $X(g,h)$  on $G \times G $ with values in $B(H)$ and we will occasionally use the expression $X = \sum_{g,h} X(g,h) \otimes e_{(g,h)}$ to describe such a function on $G \times G.$  
 We can then   define the operation of taking the main diagonal of a function with values in $B(H),$ or the diagonal of  a bounded operator   $x$ on $H \otimes \ell_2(G)$ by 
\begin{equation}
 \mathrm{Diag}(x): = \sum_{g \in G} x_{(g,g)}\otimes e_{(g,g)}.   
\end{equation}
Based on this we may generalize some of the results from \cite{C2} to the von Neumann algebra setting and define Fourier coefficients $x_g$ for an operator $x$ in the von Neumann crossed product algebra by \begin{equation}
\forall x \in \cl(\car, \a, G)\, \forall g \in G: \quad x_g:= \mathrm{Diag}(L_g^*x)
\end{equation}
We will write $x \sim\sum_g L_g x_g $ and it is a matter of computation to see that if $x$  is a finite sum of operators of the form $ x = \sum_{ g \in S}  L_g\Psi(r_g)$ then $x_g = \Psi(r_g).$ In \cite{C2} it was demonstrated that the Fourier Series for operators in a C*-algebraic reduced crossed product are convergent with respect to the topology given by the norm $\sqrt{\|\mathrm{Diag}(y^*y)\|}.$ In the von Neumann case it may not be like this,   but the series does converge with respect to the semi-norms $p_\xi$ on $B(H \otimes \ell_2(G))$ defined by 
\begin{equation}
\forall \xi \in H \otimes \ell_2(G)\, \forall x \in B(H\otimes \ell_2(G)): \quad p_\xi(x) := \|\big(\mathrm{Diag}(x^*x)\big)^{\frac{1}{2}}\xi\|.
\end{equation}   
It may not be obvious that $p_\xi$ is a semi-norm, but we may repeat the argument from the proof of Proposition 2.2 of \cite{C2} in a slightly different form and then remark, that since the mapping Diag is unital,  completely positive and normal, there exists a normal representation $\rho $ of $B(H \otimes \ell_2(G)) $ on a Hilbert space $K$ and an isometry $V: H \to K$ such that Diag$(x)  = V^*\rho(x)V, $ and then \begin{align*}
\forall x \in B(H &\otimes \ell_2G))\, \forall \xi \in H \otimes \ell_2(G) & \\ 
p_\xi(x) & = \|\big(\mathrm{Diag}(x^*x)\big)^{\frac{1}{2}}\xi\| \\
& = \|( V^*\rho(x^*x)V)^{\frac{1}{2}}\xi\| \\
& = \big(\langle V^*\rho(x^*x)V\xi, \xi\rangle \big)^{\frac{1}{2}} \\
&=\|\rho(x)V\xi\|,
\end{align*}  so it follows that $p_\xi$ is a semi-norm, which is continuous with respect to the ultra-strong topology on $B(H\otimes \ell_2(G)).$ It is easy to check that this family of semi-norms is separating, so the convergence of the series $\sum_g L_gx_g$ with respect to these semi-norms may be seen from the proof of Proposition 2.3  in \cite{C2}.
 It is now possible  to write 
down the generalization of the equations 
(\ref{FouMn}) and (\ref{FouZ})  such that the symbol $" \sim "$ means that the sum  converges in this topology.  
 
\begin{equation} \label{FouRG}
\forall x \in \cl(\car, \a, G): \quad x \sim \sum_{g \in G} L_gx_g.
\end{equation} 

When we look at elements in $B(H \otimes \ell_2(G))$ as block matrices indexed by elements in $G$ and with entries in $B(H)$ we may look at the Schur block  product defined by $$ A \square B=  (\sum_{g,h} a_{(g,h)} \otimes e_{(g,h)})\square (\sum_{g,h} b_{(g,h)} \otimes e_{(g,h)}):= \sum_{g,h} (a_{(g,h)}b_{(g,h)}) \otimes e_{(g,h)}
$$
When we restrict this product to the von Neumann algebra $\cl(\car, \a, G) $ we prefer to call it the Hadamard product and denote it $x\star_h y$ so we have 
\begin{equation}
\forall x, y \in \cl(\car, \a, G): \quad x\star_h y \in \cl(\car, \a, G)), \text{ and } x \star_h y \sim \sum_{g\in G} L_gx_gy_g.
\end{equation}
From here it is clear that the Hadamard product really wants us to focus on the operator Banach algebra -  not *-algebra -  $\ell_\infty(G, \car)$ equipped with the point-wise product. 
\begin{proposition} \label{fi}
The mapping $\f: \cl(\car, \a, G) \to \ell_\infty(G,\car) $ defined by 
$$\f(x)_g := \Psi^{-1}\big( \mathrm{Diag}(L_{g^{-1}}x)\big)$$ 
is a contraction and a  homomorphism of the algebra $( \cl(\car, \a, G), \star_h ) $ to the algebra $(\ell_\infty(G, \car), \cdot).$
\end{proposition}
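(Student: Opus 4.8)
The plan is to treat the three assertions---well-definedness, contractivity, and multiplicativity---in turn, exploiting throughout that $\Psi$ is a normal faithful $*$-isomorphism of $\car$ onto the von Neumann subalgebra $\Psi(\car)\subseteq\cl(\car,\a,G)$, and that the $g$-th component of $\f$ is nothing but the $g$-th Fourier coefficient transported back by $\Psi^{-1}$, i.e.\ $\f(x)_g=\Psi^{-1}(x_g)$ with $x_g=\mathrm{Diag}(L_g^{*}x)=\mathrm{Diag}(L_{g^{-1}}x)$.

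First I would check that $\f$ is well defined, that is, that each $x_g$ lies in $\Psi(\car)$ so that $\Psi^{-1}(x_g)$ makes sense. Since $L_{g^{-1}}=L_g^{*}\in\cl(\car,\a,G)$ and the latter is an algebra, $L_{g^{-1}}x\in\cl(\car,\a,G)$; and $\mathrm{Diag}$ restricts on $\cl(\car,\a,G)$ to the normal conditional expectation onto $\Psi(\car)$, as one sees on the generators via $\mathrm{Diag}(L_h\Psi(r))=\delta_{h,e}\Psi(r)$ and then extends by normality and ultra-weak density of the finite sums. Hence $x_g=\mathrm{Diag}(L_{g^{-1}}x)\in\Psi(\car)$, and $\f(x)_g\in\car$ is defined. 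Linearity of $\f$ is immediate from linearity of $\mathrm{Diag}$, of left multiplication by $L_{g^{-1}}$, and of $\Psi^{-1}$.

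For contractivity I would use that $\Psi^{-1}$ is an isometry on $\Psi(\car)$ (being the inverse of a faithful normal representation of a von Neumann algebra), that the unital completely positive map $\mathrm{Diag}$ has norm one, and that $L_{g^{-1}}$ is unitary. Then for every $g$,
$$ \|\f(x)_g\|_{\car}=\|\mathrm{Diag}(L_{g^{-1}}x)\|\le\|L_{g^{-1}}x\|=\|x\|, $$
and taking the supremum over $g$ gives $\|\f(x)\|_{\ell_\infty(G,\car)}\le\|x\|$; in particular $\f(x)$ really is a bounded $\car$-valued function on $G$, so $\f$ lands in $\ell_\infty(G,\car)$ and is a contraction.

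The multiplicativity is where the content sits. The strategy is to establish the coefficient identity $(x\star_h y)_g=x_gy_g$ and then invoke that $\Psi^{-1}$ is a homomorphism. By the Fourier expansion of the Hadamard product recorded before the proposition, $x\star_h y\sim\sum_{g}L_gx_gy_g$, and since $x_g,y_g\in\Psi(\car)$ and $\Psi(\car)$ is an algebra, the coefficients $x_gy_g$ again lie in $\Psi(\car)$. The coefficients in such an expansion are unique: one recovers the $g$-th one by applying $\mathrm{Diag}(L_{g^{-1}}\,\cdot\,)$ term by term, using $\mathrm{Diag}(L_{g^{-1}h}\Psi(s))=\delta_{g,h}\Psi(s)$ and normality to justify the interchange with the sum, which converges in the semi-norms $p_\xi$. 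Hence $(x\star_h y)_g=x_gy_g$, and applying $\Psi^{-1}$ gives
$$ \f(x\star_h y)_g=\Psi^{-1}(x_gy_g)=\Psi^{-1}(x_g)\,\Psi^{-1}(y_g)=\f(x)_g\,\f(y)_g, $$
which is exactly $\f(x\star_h y)=\f(x)\cdot\f(y)$ in $(\ell_\infty(G,\car),\cdot)$. The hard part will be the justification of the identity $(x\star_h y)_g=x_gy_g$: one must be sure that $x\star_h y$ stays inside $\cl(\car,\a,G)$, so that speaking of its Fourier coefficients is legitimate, and that the interchange of $\mathrm{Diag}(L_{g^{-1}}\,\cdot\,)$ with the defining sum is valid in the $p_\xi$-topology. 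Both facts are supplied by the material preceding the proposition, so the argument reduces to the bookkeeping above.
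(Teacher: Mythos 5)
Your treatment of well-definedness and contractivity is correct: the check $\mathrm{Diag}(L_h\Psi(r))=\delta_{h,e}\Psi(r)$ on generators together with normality of $\mathrm{Diag}$ and ultra-weak density gives $\mathrm{Diag}(L_{g^{-1}}x)\in\Psi(\car)$, and the estimate $\|\mathrm{Diag}(L_{g^{-1}}x)\|\le\|L_{g^{-1}}x\|=\|x\|$ plus the isometry of $\Psi^{-1}$ gives the contraction property. The paper states the proposition without any written proof, so these points are exactly the ones a careful reader must supply, and you supply them correctly.

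The gap is in your multiplicativity step. You extract the coefficients of $x\star_h y$ from the expansion $x\star_h y\sim\sum_g L_gx_gy_g$ term by term, and you justify interchanging $\mathrm{Diag}(L_{g^{-1}}\,\cdot\,)$ with the sum by \emph{normality}. That justification does not work: normality of $\mathrm{Diag}$ is ultra-weak continuity, and $p_\xi$-convergence of the partial sums does not imply their ultra-weak convergence; the partial sums need not even be bounded in norm (for $G=\bz$, $\car=\bc I$ this is precisely the classical unboundedness of Fourier partial sums of a continuous function, the very phenomenon that forces the paper to use Ces\`aro/F{\o}lner means later). Nor is the needed $p_\xi$-continuity of $\mathrm{Diag}$ ``supplied by the material preceding the proposition,'' as you assert. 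The step can be repaired: by the Kadison--Schwarz inequality for the unital completely positive map $\mathrm{Diag}$ one has $\mathrm{Diag}(y)^*\mathrm{Diag}(y)\le\mathrm{Diag}(y^*y)$, hence $\|\mathrm{Diag}(y)\xi\|\le p_\xi(y)$, and $p_\xi(L_{g^{-1}}y)=p_\xi(y)$ since $L_{g^{-1}}$ is unitary; so $y\mapsto\mathrm{Diag}(L_{g^{-1}}y)$ is continuous from the $p_\xi$-topology to the strong topology, which legitimizes the term-by-term extraction. But the detour through the series is unnecessary: $\star_h$ is by definition the restriction of the entrywise Schur block product, so
\begin{equation*}
\mathrm{Diag}\big(L_{g^{-1}}(x\,\square\, y)\big)=\sum_{t\in G}x_{(gt,t)}y_{(gt,t)}\otimes e_{(t,t)}
=\mathrm{Diag}(L_{g^{-1}}x)\,\mathrm{Diag}(L_{g^{-1}}y),
\end{equation*}
the last equality because both factors are diagonal operators; applying the isomorphism $\Psi^{-1}$ gives $\f(x\star_h y)=\f(x)\cdot\f(y)$ in one line. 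This entrywise computation, requiring no convergence argument at all, is evidently the (omitted) proof the paper has in mind.
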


We mentioned above Schur's fundamental result that the Schur product of 2 scalar and positive semi definite  matrices is positive semi definite. The same is true for the Hadamard product inside a crossed product of an {\em abelian }  C*-algebra by a discrete group. This follows from  more general results by   Jean Renault \cite{R1}, \cite{R2} on groupoids.  The works show  that the product of 2 functions of positive type on  a  groupoid is also a function of positive type. We want to be able to deal with the case where we study the crossed product of an abelian von Neumann algebra by a discrete group and also with the case where a   Schur multiplier with scalar entries acts on block matrices over a C*-algebra. Both results are known, but we have a simple lemma, which is a bit more general, and  covers both situations.  
\begin{lemma} \label{PosProd} 
Let $\cc$ be a C*-algebra with a pair of commuting sub C*-algebras $\ca, \cb$ and $n$ a natural number. Let $A =(a_{(i,j)})$ be a positive operator in $M_n(\ca)$  and $B= (b_{(i,j)})$ a positive operator in $M_n(\cb).$ The Schur block product $C:= A \square B := (a_{(i,j)}b_{(i,j)}) $ is positive in $M_n(\cc).$ 
\end{lemma}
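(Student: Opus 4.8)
The plan is to realise $C$ as a Gram matrix over $\cc$, which is the C*-algebraic incarnation of the classical proof that a Schur product of positive scalar matrices is positive semidefinite. First I would use that $M_n(\ca)$ and $M_n(\cb)$ are themselves C*-algebras in which positive elements admit a square root: writing $X := A^{1/2} \in M_n(\ca)$ and $Y := B^{1/2} \in M_n(\cb)$ gives $A = X^*X$ and $B = Y^*Y$. Denoting the matrix entries by $x_{(k,i)} \in \ca$ and $y_{(l,i)} \in \cb$, this produces the finite sums
\begin{equation*}
a_{(i,j)} = \sum_{k} x_{(k,i)}^* x_{(k,j)}, \qquad b_{(i,j)} = \sum_{l} y_{(l,i)}^* y_{(l,j)}.
\end{equation*}

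The heart of the argument is then a rearrangement of the product $c_{(i,j)} = a_{(i,j)} b_{(i,j)}$ that exploits the commutation of $\ca$ with $\cb$. Multiplying the two expansions gives $c_{(i,j)} = \sum_{k,l} x_{(k,i)}^* x_{(k,j)} y_{(l,i)}^* y_{(l,j)}$, and since every element of $\ca$ commutes with every element of $\cb$ I can move $y_{(l,i)}^*$ to the far left past both $x_{(k,j)}$ and $x_{(k,i)}^*$, obtaining
\begin{equation*}
c_{(i,j)} = \sum_{k,l} y_{(l,i)}^* x_{(k,i)}^* x_{(k,j)} y_{(l,j)} = \sum_{k,l} \big(x_{(k,i)} y_{(l,i)}\big)^* \big(x_{(k,j)} y_{(l,j)}\big).
\end{equation*}
Setting $w_{((k,l),i)} := x_{(k,i)} y_{(l,i)} \in \cc$ and letting $W$ be the rectangular matrix over $\cc$ with rows indexed by the pairs $(k,l)$ and columns indexed by $i$, the displayed identity says precisely that $C = W^*W$, and a matrix of the form $W^*W$ is positive in $M_n(\cc)$.

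The only point requiring care is the bookkeeping in the rearrangement, namely checking that each commutation is applied to a factor in $\ca$ against a factor in $\cb$; since all the sums are finite there are no convergence or domain issues, and the square-root factorisations are available in any C*-algebra, so that neither separability nor a concrete representation is needed. I expect this commutation rearrangement into Gram form to be the main (and essentially the only) substantive step, the positivity of $W^*W$ being automatic.
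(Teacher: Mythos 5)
Your proposal is correct and is essentially the paper's own proof: both factor $A = X^*X$ and $B = Y^*Y$ via C*-algebraic square roots, expand $c_{(i,j)} = \sum x_{(k,i)}^* x_{(k,j)} y_{(l,i)}^* y_{(l,j)}$, and use the commutation of $\ca$ with $\cb$ to rearrange each term into the Gram form $\big(x_{(k,i)}y_{(l,i)}\big)^*\big(x_{(k,j)}y_{(l,j)}\big)$. The only difference is cosmetic bookkeeping: you assemble the Gram vectors into one rectangular matrix $W$ with $C = W^*W$, while the paper writes $C = \sum_{s,t} z_{(s,t)}^* z_{(s,t)}$ with each $z_{(s,t)}$ a row operator in $M_n(\cc)$.
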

\begin{proof}
The algebras are C*-algebras, so positive operators have positive square roots, and let $X := \sqrt{A} = (x_{(i,j)}), Y:= \sqrt{B} =(y_{(i,j)}). $  We will  write $X = \sum_{i,j} x_{(i,j)} \otimes e_{(i,j)}$ and $Y = \sum_{i,j} y_{(i,j)} \otimes e_{(i,j)},$ and ahead of the computations to come we define for each pair $s,t$ of integers in  $\{1, \dots, n\}$ the operator $z_{(s,t)} := \sum_{j=1}^n y_{(t,j)}x_{(s,j)} \otimes e_{(1,j)}.$  We can then start computing  
\begin{align*}
C &= \sum_{i,j = 1}^n a_{(i,j)}b_{(i,j)} \otimes e_{(i,j)} \\
& = \sum_{i,j = 1}^n\bigg( \sum_{s=1}^n\sum_{t=1}^n x_{(s,i)}^*x_{(s,j)}y_{(t,i)}^*y_{(t,j)}\bigg) \otimes e_{(i,j)} \\
& \quad \text{ since the algebras } \ca \text{ and } \cb \text{ commute } \\ 
&=\sum_{i,j = 1}^n\bigg( \sum_{s=1}^n\sum_{t=1}^n x_{(s,i)}^*y_{(t,i)}^*y_{(t,j)}x_{(s,j)}\bigg) \otimes e_{(i,j)} \\
&= \sum_{s=1}^n\sum_{t=1}^n\bigg(\sum_{i = 1}^n x_{(s,i)}^*y_{(t,i)}^*\otimes e_{(i,1)}\bigg)\bigg(\sum_{j =1}^n y_{(t,j)}x_{(s,j)} \otimes e_{(1,j)}\bigg)  \\
&= \sum_{s=1}^n\sum_{t=1}^n z_{(s,t)}^* z_{(s,t)} \, \geq 0.
\end{align*}
\end{proof}

\section{ Completely positive, unital and $\Psi(\car)$  bimodular mappings from $B(H \otimes \ell_2(G))$ to $\cl(\car, \a, G)$  }

We start with a construction, which to any unit vector $\xi$ in $\ell_2(G)$ associates a unital  completely positive and $\Psi(\car)$ bimodular  mapping $\s_\xi$  of $B(H \otimes \ell_2(G))$ into  $\cl(R, \a, G).$

We will now recall the setup of Section \ref{sec2}, so $\car$ is an injective von Neumann algebra acting on a Hilbert space $H$ and the discrete group $G$ acts by *-automorphisms $\a_g$ on $\car. $
A positive definite function $\c(g)$ on $G$ with $\c(e) = 1$  defines a Hadamard multiplier $M^\c$ on $\cl(\car,\a, G) $ by $$M^\c\star_h\sum_g L_g\Psi(r_g) ):= \sum_g \c(g)L_g \Psi(r_g),$$ and we know,  Lemma \ref{PosProd}, that the multiplication mapping $M^\c$ is unital and completely positive.  The operators of the form $L_g\Psi(r)$ are eigen vectors with the eigen value $\c(g).$   
We will fix  a completely positive conditional expectation $\pi$ of $B(H)$ onto $\car.$  Then, to any unit vector $\xi$ in $\ell_2(G)$  we can construct a completely positive unital and $\Psi(\car)$ bimodular   mapping $\s_\xi$ of $B(H \otimes \ell_2(G))$ into $\cl(\car, \a, G),$ such that for the positive definite function $\c(g) := \langle \l_g \xi, \xi\rangle$ the mapping $\s_\xi$ is an extension of the Hadamard multiplier $M^\c.$

\begin{theorem} \label{sigma}
Let $\xi = (k_g)_{( g \in G)}$ be a unit vector in $\ell_2(G),$ then the  mapping $\s_\xi$ of $B(H\otimes \ell_2(G))$ to $\cl(\car, \a, G)$    defined by the equation
\begin{equation} \label{sigmaS}
\s_\xi(x) := \sum_{g,h \in G}L_{gh^{-1}}\Psi\big(\a_h(\pi(x_{(g,h)})\big)\overline{k_g}k_h 
\end{equation}
is unital, completely positive, $\Psi(\car)$ bimodular  and any operator of the form $L_g\Psi(r)$ is an eigen vector with the eigen value $\langle \l_g \xi, \xi\rangle.$ 
\end{theorem}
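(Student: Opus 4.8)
The plan is to reduce everything to a single clean rewriting of $\s_\xi$ and then read off each property. First I would use the covariance relation $L_g\Psi(r)L_g^*=\Psi(\a_g(r))$ (equivalently $L_g\Psi(r)=\Psi(\a_g(r))L_g$), which one checks directly from $\l_g e_{(k,k)}\l_g^*=e_{(gk,gk)}$ together with the homomorphism property $\a_{m^{-1}}\a_g=\a_{m^{-1}g}$. Since $L_{gh^{-1}}L_h=L_g$, this turns the defining formula into
\[
\s_\xi(x)=\sum_{g,h\in G}\ovl{k_g}k_h\,L_g\,\Psi\big(\pi(x_{(g,h)})\big)\,L_h^*.
\]
The advantage is that every summand $L_g\Psi(\pi(x_{(g,h)}))L_h^*$ is a product of elements of $\cl(\car,\a,G)$, so it already lies in the algebra; only positivity, boundedness and convergence of the sum remain to be organised.

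For complete positivity I would exhibit $\s_\xi$ as a compression of an amplified completely positive map. Put $T_g:=\ovl{k_g}L_g$ and define $V\colon H\otimes\ell_2(G)\to (H\otimes\ell_2(G))\otimes\ell_2(G)$ by $V\eta:=\sum_{h}T_h^*\eta\otimes\d_h$; since $\|\xi\|=1$ and each $L_h$ is unitary, $\|V\eta\|^2=\sum_h|k_h|^2\|\eta\|^2=\|\eta\|^2$, so $V$ is an isometry. A short computation with $e_{(g,h)}\d_{h'}=\d_{h,h'}\d_g$ gives
\[
V^*\Big(\sum_{g,h}\Psi(\pi(x_{(g,h)}))\otimes e_{(g,h)}\Big)V=\sum_{g,h}T_g\Psi(\pi(x_{(g,h)}))T_h^*=\s_\xi(x),
\]
that is, $\s_\xi(x)=V^*\big((\Psi\circ\pi)\otimes\id_{B(\ell_2(G))}\big)(x)\,V$. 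As $\pi$ is completely positive and $\Psi$ is a $*$-homomorphism, $\Psi\circ\pi$ is completely positive; amplifying by $\id$ and compressing by $V$ preserve complete positivity, so $\s_\xi$ is completely positive. Because the conditional expectation $\pi$ furnished by injectivity of $\car$ need not be normal, I would make the amplification rigorous through finite truncations: for finite $F\subseteq G$ the map $\s_\xi^F(x):=\sum_{g,h\in F}T_g\Psi(\pi(x_{(g,h)}))T_h^*$ factors as a corner compression $x\mapsto(x_{(g,h)})_{g,h\in F}$, followed by $\pi\otimes\id_{M_F}$ and $\Psi\otimes\id_{M_F}$ (genuine matrix amplifications, hence completely positive with no normality needed), followed by the row compression $Z\mapsto\mathcal{T}Z\mathcal{T}^*$ with $\mathcal{T}=(T_g)_{g\in F}$; thus each $\s_\xi^F$ is completely positive.

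Next I would settle boundedness, convergence and the range claim simultaneously. Each $\s_\xi^F$ is completely positive with $\s_\xi^F(I)=\sum_{g\in F}|k_g|^2 I\le\|\xi\|^2 I=I$, so $\|\s_\xi^F\|\le 1$ and the net $(\s_\xi^F(x))_F$ is uniformly bounded by $\|x\|$; writing $V_F\eta:=\sum_{h\in F}T_h^*\eta\otimes\d_h$ one has $\s_\xi^F(x)=V_F^*((\Psi\pi)\otimes\id)(x)V_F$ and $V_F\to V$ strongly, whence $\s_\xi^F(x)\to V^*((\Psi\pi)\otimes\id)(x)V=\s_\xi(x)$ strongly. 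Since every $\s_\xi^F(x)$ is a finite sum of operators $L_g\Psi(\cdot)L_h^*\in\cl(\car,\a,G)$ and $\cl(\car,\a,G)$ is ultra-weakly closed, the limit $\s_\xi(x)$ lies in $\cl(\car,\a,G)$; this also legitimises the symbol $\sum_{g,h}$ as a convergent sum. Unitality drops out of the same rewriting: for $x=I$ one has $x_{(g,h)}=\d_{g,h}I_H$ and $\pi(I_H)=I_H$, so $\s_\xi(I)=\sum_g|k_g|^2 L_gL_g^*=\|\xi\|^2 I=I$.

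For $\Psi(\car)$-bimodularity, fix $a,b\in\car$; since $\Psi(a),\Psi(b)$ are diagonal with entries $\a_{g^{-1}}(a),\a_{h^{-1}}(b)$, the $(g,h)$ entry of $\Psi(a)x\Psi(b)$ is $\a_{g^{-1}}(a)x_{(g,h)}\a_{h^{-1}}(b)$. Using that $\pi$ is $\car$-bimodular and that $\Psi$ is a homomorphism, then factoring with $L_g\Psi(\a_{g^{-1}}(a))=\Psi(a)L_g$ and $\Psi(\a_{h^{-1}}(b))L_h^*=L_h^*\Psi(b)$ (both instances of the covariance relation), each summand becomes $\Psi(a)\,L_g\Psi(\pi(x_{(g,h)}))L_h^*\,\Psi(b)\,\ovl{k_g}k_h$, so $\s_\xi(\Psi(a)x\Psi(b))=\Psi(a)\s_\xi(x)\Psi(b)$. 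Finally, for the eigenvector claim I take $x=L_g\Psi(r)$, compute $x_{(p,q)}=\a_{q^{-1}}(r)$ if $p=gq$ and $0$ otherwise, use $\pi|_{\car}=\id$ and $\a_q\a_{q^{-1}}=\id$ to reduce $\s_\xi(x)$ to $L_g\Psi(r)\sum_q\ovl{k_{gq}}k_q$, and recognise $\sum_q\ovl{k_{gq}}k_q=\langle\l_g\xi,\xi\rangle$; hence $\s_\xi(L_g\Psi(r))=\langle\l_g\xi,\xi\rangle L_g\Psi(r)$. The one genuinely delicate point is the complete-positivity/convergence step, precisely because injectivity of $\car$ only supplies a (possibly non-normal) norm-one projection $\pi$; the finite-truncation device above is what keeps both the amplification and the passage to the limit legitimate.
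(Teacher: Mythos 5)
Your proof is correct in its essentials, but it is organised quite differently from the paper's. The paper never writes $\s_\xi$ as a single compression: after a change of variables it splits the defining sum along the diagonal index $u$ coming from $\Psi$, obtaining $\s_\xi=\sum_{u\in G}\t_u$, where each $\t_u$ is the conjugation by $(I_H\otimes\rho_u)(I_H\otimes M_\xi^*)$ (with $\rho_u$ the right translation and $M_\xi$ the diagonal operator built from $\xi$) around the entrywise amplification $(\a_u\circ\pi)\otimes\id$, hence completely positive; the infinite sum over $u$ is then handled by observing that for positive $x$ the partial sums form a bounded increasing net, which converges ultra-strongly. The eigenvector computation is done directly, and bimodularity is only sketched (the left side is declared ``similar, but slightly more complicated''). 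You instead use the covariance relation to put every summand in the sandwich form $\ovl{k_g}k_h\,L_g\Psi(\pi(x_{(g,h)}))L_h^*$ and exhibit the whole map at once as a Stinespring-type compression $V^*\big((\Psi\circ\pi)\otimes\id\big)(x)V$ by an explicit isometry $V$. This buys you several things the paper gets less cleanly: unitality is immediate from $V^*V=I$; membership of $\s_\xi(x)$ in $\cl(\car,\a,G)$ follows because each finite partial sum lies in the algebra and uniformly bounded strong limits stay there; and your bimodularity argument treats both sides symmetrically by commuting $\Psi(a)$ and $\Psi(b)$ through $L_g$ and $L_h^*$, which is tighter than the paper's sketch. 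The eigenvector computation agrees with the paper's.

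One point needs patching. In the convergence step you write $\s_\xi^F(x)=V_F^*\big((\Psi\circ\pi)\otimes\id\big)(x)V_F$ and let $F$ grow; this presupposes that the entrywise amplification $\big((\Psi\circ\pi)\otimes\id\big)(x)$ is a well-defined bounded operator, which is exactly what the possible non-normality of $\pi$ puts in doubt --- so, as written, the finite-truncation device does not yet discharge the worry you yourself raised. The fix is short, and you already have the ingredients: the finite truncations $\big((\Psi\circ\pi)\otimes\id_{M_F}\big)$ applied to the corners $(x_{(g,h)})_{g,h\in F}$ are uniformly bounded by $\|x\|$ (a compression followed by unital completely positive maps), and an operator matrix all of whose finite principal truncations have norm at most $\|x\|$ defines a unique bounded operator of norm at most $\|x\|$ (check it on finitely supported vectors). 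With that lemma in hand, $V_F\to V$ strongly plus uniform boundedness gives the strong convergence $\s_\xi^F(x)\to V^*\big((\Psi\circ\pi)\otimes\id\big)(x)V$, and complete positivity of $\s_\xi$ follows as a pointwise strong limit of the completely positive maps $\s_\xi^F$. You are in good company here: the paper's own maps $\t_u$ invoke the same amplification $(\a_u\circ\pi)\otimes\id_{B(\ell_2(G))}$ of the possibly non-normal $\pi$ without comment, so the same lemma is implicitly needed there too.
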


\begin{proof}
Let $x = \sum_{g,h} x_{(g,h)} \otimes e_{(g,h)}$ be a bounded operator on $H \otimes \ell_2(G)$ then we may consider $x$ as an element in $\ell_\infty(G \times G, B(H)),$ and in this setting we can see that the definition of $\s_\xi$ is meaningful as a contraction operator on $\ell_\infty(G \times G, B(H)). $ To see this, we  will make a change of variables in the definition of $\s_\xi$ such that $h=h$ and $t = gh^{-1},$ then 
\begin{equation}  \label{sinfty}
\s_\xi(x) := \sum_{t,h \in G}L_{t}\Psi\big(\a_h(\pi(x_{(th,h)})\big)
\overline{k_{th}}k_h. 
\end{equation} We know that the product of the two $\ell_2$ families $(k_h)_{(h \in G)}$ and  $(\overline{k_{th}})_{(h \in G)}$ is in $\ell_1(G)$ so $\|\s_\xi(x)\|_\infty \leq \|x\|_\infty.$  It is worth to remember in the following computations that whenever we make a change of variables or change the order of summation, then at each entry indexed by a pair $(g,h)$ we are dealing with a sum of a family, which is element-wise dominated in norm by a positive $\ell_1(G)$ family, so the sums  stays well defined at each entry in $\ell_\infty(G \times G, B(H)).$ When we insert the formulas   $\Psi(r) = \sum_v \a_{v^{-1}}(r) \otimes e_{(v,v)} $ and $L_t = \sum_u I \otimes e_{(tu,u)}$ in the definition of $\s_\xi$  we get

\begin{align} \label{cp}
\s_\xi(x) & = \sum_{g,h\in G} \sum_{u, v \in G}  
\big(I \otimes e_{(gh^{-1}u,u)}\big)\big(\a_{v^{-1}h}(\pi(x_{(g,h)} )) \otimes e_{(v,v)}\big)\overline{k_g}k_h  \\
& = \sum_{g,h\in G} \sum_{v\in G}  
\a_{v^{-1}h}(\pi(x_{(g,h)} )) \otimes e_{(gh^{-1}v,v)}\overline{k_g}k_h \notag \\ & \text{change variables } g =g ,\, h=h, \, u = v^{-1}h \notag \\
& =\sum_{u \in G} \sum_{g,h \in G}  
\a_{u}(\pi(x_{(g,h)} )) \otimes e_{(gu^{-1},hu^{-1})}\overline{k_g}k_h \notag \end{align} 
We will now fix a group element $u$ and let $\rho_u$ denote the unitary right translation on $\ell_2(G)$ given by $\rho_u \d_g := \d_{gu^{-1}}.$ In matrix form we get $\rho_u = \sum_t e_{(tu^{-1}, t)}.$ We will define the diagonal operator $M_\xi$ on $\ell_2(G)$ by $M_\xi := \sum_g k_h e_{(h,h)},$ then we can  define a completely positive unital map $\t_u $ of $B(H \otimes \ell_2(G))$ into $B(H \otimes \ell_2(G))$ by 
\begin{equation} \notag
\t_u(x) = (I_H \otimes \rho_u) (I_H \otimes M_\xi^*) \bigg(\big((\a_u \circ \pi)\otimes {\mathrm id}_{B(\ell_2(G))}\big)(x)\bigg)(I_H \otimes M_\xi) (I_H \otimes \rho_u^*)
\end{equation}
and in matrix form we get 
\begin{align*}
\t_u(x) & = \sum_{s, t \in G} \sum_{g,h \in G}\sum_{i,j \in G} (I_H \otimes e_{(su^{-1},s)})(I_H \otimes \overline{k_g}e_{(g,g)} )\\ & \quad \quad  \bigg(a_u(\pi(x_{(i,j)}))\otimes e_{(i,j)} \bigg) (I_H \otimes k_he_{(h,h)} )(I_H \otimes e_{(tu, t)}) \\ & \text{ since } s = g = i \text{ and } j =h=  tu \\
&= \sum_{g,h \in G}  \overline{k_g}k_h\a_u(\pi(x_{(g,h)} )) \otimes e_{(gu^{-1}, hu^{-1})}.
\end{align*}
From (\ref{cp})  we may then  conclude that $\s_\xi$ is completely positive as a sum of completely positive mappings. One may be worried about the infinite sum over $u$ in $G$ but a simple computation shows that for the unit we get $$ \s_\xi(I_{H \otimes \ell_2(G)})  = \sum_{g \in G} \sum_{u \in G} |k_g|^2 I_H \otimes e_{(gu^{-1}, gu^{-1})}= I_{H \otimes \ell_2(G)}$$ so for a positive bounded operator $x$ the finite  sums over $u$ form a bounded increasing net, and such a net is known to be ultra-strongly convergent. In particular the sum on each entry $(g,h) $ is convergent towards the entry of the ultra-strongly convergent sum, so $\rho_\xi$ is unital and completely positive.

 We show   by a direct computation that any operator of the form $L_t\Psi(r)$ is an eigen vector for $\s_\xi$ with the eigen value $\langle \l_t \xi, \xi\rangle,$  and remark first that  $L_t\Psi(r)$ in matrix form is given as \begin{equation} \label{LtPsiMatrix} L_t\Psi(r)  = \sum_{u,h \in G}(I \otimes e_{(tu,u)})(\a_{h^{-1}}(r)\otimes e_{(h,h)}) = \sum_{h\in G} \a_{h^{-1}}(r) \otimes e_{(th, h)}, \end{equation} so \begin{equation} \label{(g,h)}
  L_t\Psi(r)_{(g,h)} = \begin{cases}
  0 \, \, \, \text{ if } g \neq th \\
  \a_{h^{-1}}(r) \text{ if } g = th.
  \end{cases}  
 \end{equation}
The matrix formulation of $\s_\xi$ from equation (\ref{cp}) may then be applied and we get
\begin{align} \label{eigenvalue}
\s_\xi(L_t\Psi(r)) = & \sum_{u \in G}\sum_{ h \in G}  \overline{k_{th}}k_h \a_{uh^{-1}}(r) \otimes e_{(thu^{-1},hu^{-1})}, \text{ by } (\ref{LtPsiMatrix}) \notag \\   =& \sum_{h \in G}  \overline{k_{th}}k_h L_t\Psi(r) \notag \\ =&
\langle \l_t \xi, \xi\rangle L_t(\Psi(r).
\end{align} 

To see that $\s_\xi$ is right $\Psi(\car)$ modular we remark first that for an $x$ in $B(H \otimes \ell_2(G))$ and an $r$ in $\car$ we have 
\begin{align*} x\Psi(r) &= \sum_{g,h , v \in G }( x_{(g,h)} \otimes e_{(g,h)} )(\a_{v^{-1}}(r) \otimes e_{(v,v)} \\& = \sum_{g,h \in G} x_{(g,h)} \a_{h^{-1}}(r) \otimes e_{(g,h)}.
\end{align*}
It is known that  $\pi$ is  $\car$ bimodular, so $$\pi\big(x_{(g,h)}\a_{v^{-1}}(r)\big) = \pi\big(x_{(g,h)}\big)\a_{v^{-1}}(r),$$ and then the eigenvalue properties of $\s_\xi$ plus  a repetition of the computations from above yield the last steps in establishing the right $\Psi(\car)$ modularity. The left $\Psi(\car)$ modularity follows in a similar, but slightly more complicated way, and the proposition follows.
\end{proof}

\begin{definition} \label{chiFin} Let $G$ be a discrete group and $S$ a finite non empty  subset of $G$. The  function $\c_S$ on $G$ is defined by \begin{equation}\label{fS}
\c_S(g):= \frac{| S \cap gS|}{|S|}.
\end{equation}
\end{definition}
The following proposition follows from  Hulanicki's work \cite{Hu} on invariant means.
  
\begin{proposition} \label{chiS}
Let $G$ be a discrete group and $S$ a finite non empty subset of $G,$ The function $\c_S$ is  positive definite on $G$ with $\c_S(e) = 1$ and support equal to $\{gh^{-1}\, : \, g, h \in S\, \},$ and for $\xi_S$ being the normalized characteristic function for $S$ in $\ell_2(G): $ $$ \c_S(g) = \langle \l_g \xi_S, \xi_S \rangle.$$  
\end{proposition}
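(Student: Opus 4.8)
The plan is to establish the final displayed identity $\c_S(g) = \langle \l_g \xi_S, \xi_S\rangle$ first, by a direct computation in $\ell_2(G)$, and then to read off the remaining three assertions from it, since positive definiteness, normalization at $e$, and the description of the support are all immediate consequences of that matrix-coefficient formula. Although the proposition is attributed to Hulanicki's work on invariant means, the four listed properties can be verified by hand, and that is the route I would take.

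First I would write the normalized characteristic function as $\xi_S = |S|^{-1/2}\sum_{s \in S}\d_s$ and use $\l_g \d_s = \d_{gs}$ to get $\l_g \xi_S = |S|^{-1/2}\sum_{s \in S}\d_{gs}$. Pairing against $\xi_S$ and using orthonormality of the basis $(\d_h)_{h \in G}$ gives $\langle \l_g \xi_S, \xi_S\rangle = |S|^{-1}\,|\{(s,s') \in S \times S : gs = s'\}|$. For a fixed $s'$ the element $s = g^{-1}s'$ is uniquely determined and lies in $S$ precisely when $s' \in gS$, so the counted set is in bijection with $S \cap gS$; hence $\langle \l_g \xi_S, \xi_S\rangle = |S \cap gS|/|S| = \c_S(g)$, which is the asserted identity.

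From this identity the rest is routine. Setting $g = e$ yields $\c_S(e) = \langle \xi_S, \xi_S\rangle = \|\xi_S\|^2 = 1$. For positive definiteness, given $g_1,\dots,g_n \in G$ and scalars $c_1,\dots,c_n$, I would use $\l_{g_i^{-1}g_j} = \l_{g_i}^*\l_{g_j}$ to compute $\sum_{i,j}\overline{c_i}c_j\,\c_S(g_i^{-1}g_j) = \big\|\sum_j c_j \l_{g_j}\xi_S\big\|^2 \geq 0$. Finally, $\c_S(g) \neq 0$ iff $S \cap gS \neq \emptyset$ iff $g = s_1 s_2^{-1}$ for some $s_1, s_2 \in S$, which is exactly the set $\{gh^{-1} : g,h \in S\}$.

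There is no serious obstacle here; the only points requiring care are the bookkeeping in the counting step (checking that the pair set maps bijectively onto $S \cap gS$) and matching the convention for positive definiteness to the placement of the inverse. The latter is in fact harmless for the support statement, since $\{gh^{-1} : g,h \in S\}$ is stable under inversion, but I would flag it to keep the argument unambiguous.
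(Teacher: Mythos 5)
Your proof is correct: the matrix-coefficient computation, the counting bijection between $\{(s,s') \in S\times S : gs = s'\}$ and $S \cap gS$, the norm-square argument for positive definiteness, and the support identification are all sound, and the convention issue you flag is indeed harmless since $\c_S(g^{-1}) = \c_S(g)$. The paper, however, does not prove the proposition at all: it simply cites the proof of Proposition 6.1 in Hulanicki's article on means and F{\o}lner conditions. So your route differs in that it is a self-contained, elementary verification of all four claims, organized around first establishing $\c_S(g) = \langle \l_g \xi_S, \xi_S\rangle$ and then deriving everything else from that single identity; this is almost certainly the same underlying computation that Hulanicki performs, but you supply it explicitly. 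What the paper's citation buys is brevity and proper attribution; what your argument buys is a reader-verifiable proof that requires no external reference and makes transparent exactly why the vector-state form of $\c_S$ immediately yields normalization, positive definiteness, and the support description.
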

\begin{proof}
The proof is part of the proof of  Proposition 6.1 of  \cite{Hu}.   
\end{proof} 

For a discrete countable group $ G$ there exists a F{\o}lner sequence of subsets $(S_n)$ of $G$, see Section \ref{aG}, and Hulanicki uses the positive definite functions $\c_{S_n} $ to obtain the positive definte function $\c(g) = 1 $ for all $g$ as a limit point. Based on Theorem \ref{sigma} we will follow Hulanicki's idea and look at a sequence of subsets $(S_n)$ of finite subsets of a discrete countable group $G,$ which acts on an injective von Neumann algebra $\car$ and then study the set of possible limit  points for the unital completely positive mappings $\s_{\xi_n},$ 
where $\xi_n$ is the normalized characteristic function for the set $S_n.$
\begin{theorem} \label{vwep}
Let $G$ be a  discrete group which acts on an injective von Neumann algebra $\car$ on a Hilbert space $H$ and
let $\pi$ denote a completely positive projection of $B(H)$ onto $\car.$ To any  sequence  $(B_n)_{(n \in \bn)} $ of finite subsets of $G$  there exists a subnet $J \ni \iota \to n(\iota) \in \bn$ such that the net of positive definite functions  $(\c_{B_{n(\iota)}}) $ converges point-wise to a positive definite, and positive  function $\c$ on $G.$ The net of unital completely positive mappings $\s_{\xi_{n(\iota)}}$ converges point-wise ultra-weakly to a unital completely positive and $\Psi(\car)$ bimodular mapping $\s$ of $B(H \otimes \ell_2(G))$ to $\cl(\car ,  A, G), $ such that
\begin {itemize}
\item[(i)]
 \label{eigenval} 
 $\forall g \in G \forall r \in \car : \, \s(L_g\Psi(r)) = \c(g)L_g\Psi(r)$ 
\item[(ii)] $\forall x \in \cl(\car, \a,G):  \, \s(x) = M^{\c} *_h x.$
\item[(iii)]  
 For each $x$ in the norm closure of $\mathrm{span}(\{L_g\Psi(r): g \in G, \, r \in \car\}),$ the net $(\s_{n(\j)}(x))_{( \j \in J)}$ converges in norm  to $\s(x).$ 
\end{itemize} \end{theorem}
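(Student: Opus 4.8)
The plan is to produce both limits at once by a single compactness argument and then to push each asserted property through the limit. First I would note that every $\c_{B_n}$ takes values in $[0,1]$, since $\c_{B_n}(g)=|B_n\cap gB_n|/|B_n|$, so $(\c_{B_n})_n$ lives in the compact space $[0,1]^G$ in the product topology. Simultaneously each $\s_{\xi_n}$ is, by Theorem \ref{sigma}, a unital completely positive (hence contractive) map of $B(H\otimes\ell_2(G))$ into $\cl(\car,\a,G)$, and the collection $\cu$ of all such UCP maps is compact in the topology of point-ultraweak convergence: for each $x$ in the unit ball the value $\s(x)$ ranges over the ultra-weakly compact unit ball of $\cl(\car,\a,G)$, and Tychonoff's theorem, together with the fact that linearity, contractivity, unitality and complete positivity are all point-ultraweakly closed conditions, gives compactness. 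I would therefore treat $n\mapsto(\c_{B_n},\s_{\xi_n})$ as a sequence in the compact product $[0,1]^G\times\cu$ and extract a convergent subnet $J\ni\iota\mapsto n(\iota)$ (a subnet rather than a subsequence, since these product topologies need not be metrizable), giving $\c_{B_{n(\iota)}}\to\c$ pointwise and $\s_{\xi_{n(\iota)}}\to\s$ point-ultraweakly. As $\cl(\car,\a,G)$ is ultra-weakly closed, $\s$ again maps into it.

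Next I would verify the structural claims by passing closed conditions through the limit. A pointwise limit of positive definite functions is positive definite, and $\c(e)=\lim\c_{B_{n(\iota)}}(e)=1$ with $\c(g)=\lim\c_{B_{n(\iota)}}(g)\ge 0$, so $\c$ is positive definite, normalized and positive. Unitality and complete positivity of $\s$ hold because both are point-ultraweakly closed. For $\Psi(\car)$-bimodularity I would start from $\s_{\xi_{n(\iota)}}(axb)=a\,\s_{\xi_{n(\iota)}}(x)\,b$ for $a,b\in\Psi(\car)$ and pass to the limit, using that left and right multiplication by a fixed bounded operator are ultra-weakly continuous.

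For (i), Theorem \ref{sigma} with Proposition \ref{chiS} gives $\s_{\xi_{n(\iota)}}(L_g\Psi(r))=\c_{B_{n(\iota)}}(g)\,L_g\Psi(r)$, and taking the ultra-weak limit on the left and the scalar limit on the right yields $\s(L_g\Psi(r))=\c(g)L_g\Psi(r)$. For (ii) I would argue via Fourier coefficients. Since $\c$ is positive definite, $M^\c$ is a well-defined completely positive Hadamard multiplier (Lemma \ref{PosProd}), and for $x\in\cl(\car,\a,G)$ with $x\sim\sum_g L_gx_g$ the operator $M^\c *_h x$ has $g$-th coefficient $\c(g)x_g$. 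On the other hand $\s_{\xi_{n(\iota)}}$ restricts on $\cl(\car,\a,G)$ to $M^{\c_{B_{n(\iota)}}}$, so $\mathrm{Diag}(L_g^*\s_{\xi_{n(\iota)}}(x))=\c_{B_{n(\iota)}}(g)x_g$; because $y\mapsto\mathrm{Diag}(L_g^*y)$ is ultra-weakly continuous I may pass to the limit to obtain $\mathrm{Diag}(L_g^*\s(x))=\c(g)x_g$. Thus $\s(x)$ and $M^\c *_h x$ lie in $\cl(\car,\a,G)$ with the same Fourier coefficients, and faithfulness of the Diag expectation (equivalently, injectivity of the Fourier transform in the separating seminorms $p_\xi$) forces $\s(x)=M^\c *_h x$. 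Finally (iii) is an $\e/3$ estimate: on a finite combination $x=\sum_{g\in F}L_g\Psi(r_g)$ one has $\|\s_{\xi_{n(\iota)}}(x)-\s(x)\|\le\sum_{g\in F}|\c_{B_{n(\iota)}}(g)-\c(g)|\,\|L_g\Psi(r_g)\|\to 0$, and since every map involved is a contraction this extends to the norm closure of the span.

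The step I expect to be the main obstacle is the joint convergence of the maps $\s_{\xi_n}$: setting up the point-ultraweak topology and its compactness carefully, confirming that the limit $\s$ stays valued in $\cl(\car,\a,G)$ and remains UCP and bimodular, and—most delicately for (ii)—justifying the interchange of the limit with the ultra-weakly continuous Fourier coefficient extraction before invoking faithfulness of Diag to upgrade equality of coefficients to equality of operators. Everything else reduces to passing closed conditions through limits and the $\e/3$ estimate.
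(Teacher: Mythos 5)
Your proposal is correct and follows essentially the same route as the paper: extract a point-ultraweakly convergent subnet of the $\s_{\xi_n}$ by compactness of the unit ball, read off the limit function $\c$ from the eigenvalue relation $\s_{\xi_n}(L_g\Psi(r))=\c_{B_n}(g)L_g\Psi(r)$, and prove (iii) by the same $\eps/3$ argument using contractivity of all the maps involved. The only substantive difference is that you give an actual proof of (ii) --- matching Fourier coefficients via ultra-weak continuity of $y\mapsto\mathrm{Diag}(L_g^*y)$ applied along the net, then invoking injectivity of the Fourier expansion to upgrade equality of coefficients to equality of operators --- where the paper merely asserts that (ii) follows from (i); your version is a careful filling-in of that step, and it correctly avoids assuming normality of the limit map $\s$.
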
 

\begin{proof}
Since the unit ball of $\cl(\car, \a, G)$ is ultra-weakly compact and all  the mappings $\s_{\xi_n} $ are  unital completely positive and $\Psi(\car) $ bimodular, we may find a  point-wise ultra-weakly convergent subnet $(\s_{\xi_{n(\j)}})_{\j \in J}$ of the sequence which converges to a unital completely positive and $\Psi(\car)$ bimodular  mapping $\s$ of $B(H \otimes \ell_2(G)) $ into $\cl(\car, \a, G).$ 
 Each $L_g$ is an eigen vector for each $\s_{\xi_n},$ so we find that $\s_{\xi_n(\j)}(L_g) $ converges in norm for $\j \in J$ to a multiple, say  $\c(g)$ of $L_g,$ and  $\s(L_g) = \c(g) L_g.$  The net of positive definite and positive  functions $(\c_{\xi_{n(\j)}})$ converges point-wise to $\c,$ so $\c$ is positive definite, positive  and satisfies $\c(e) =1.$ We then also have $\s(L_g\Psi(r)) = \c(g) L_g \Psi(r) $ for $g$ in $G$ and $r$ in $\car $ so claim (i) and (ii) follows.

 To see that  the subnet $\s_{\xi_{n(\j)}}(x)$ converges in norm to $\s(x)$ when $ x$ is in the norm closure of the linear span of operators of the form $L_g\Psi(r)$ we remark that to such an operator $x$ and a positive $\eps$  there exists an operator $x_0 = \sum_{l=1}^k L_{g_l} \Psi(r_l)$   such that $\|x- x_0\| < \eps/3.$ We know that for each $l$ with $ 1 \leq l \leq k$ will  $\c_{n(\j)}(g_l)$ converge to $ \c(g_l),$   so we may, by (i),  find a $\j_0$ in $J$ such that  $$ \forall  \j > \j_0: \quad \|  \s_{\xi_{n(\j)}} ( x_0 ) - \s(x_0)\| < \eps/3$$ and  
\begin{align*} \forall \j \geq \j_0:\quad &\|\s(x) - \s_{\xi_{n(\j)}}(x) \| \leq \|\s(x- x_0) \| \\& + \|\s(x_0) - \s_{\xi_{n(\j)}}(x_0)\| + \| \s_{\xi_{n(\j)}}(x_0 -x)\| \leq \eps, \end{align*}  
 and the proposition follows. 
\end{proof}

We find that it is worth to mention, that the property (iii) is inspired   by Haaagerup's arguments in the beginning of the proof of his Theorem 1.8 of \cite{Ha}. We will see that in the case of the group $\bz$ and $\car = \bc$ the content of item (iii) tells that the Cesàro means of a  continuous  $2\pi$ periodic function converges in norm to the function. 

It  might happen that the positive definite function $\c(g)$ of Proposition \ref{vwep} turns out to be  be trivial in the sense that $\c(g) = \d(g,e),$ or it might be that $\c(g)$ vanishes on many group elements as for instance $\c_B$ does, when $B$ is a finite set. For theoretical reasons we are interested in finding unital completely positive $\Psi(\car)$ bimodular  mappings $\s$ with associated positive definite and strictly positive  eigen value functions $\c(g),$ which are as big as possible. One reason is, that for amenable, countable discrete groups  it is possible to find $\c$ with $\c(g) = 1$ for all $g.$ Another reason is that we have got the impression during our work with this article, that the dual algebra aspect, which is seen in Proposition \ref{fi}, is important in this setting. The order in $\ell_\infty(G, \bc)$ is that  of point-wise order , so we think it may be reasonable to study the cone  of non negative positive definite functions on $G$ equipped with the point-wise order instead of the cone of positive definite functions equipped with the order given by positive definiteness.  We will end this section with the vague statement that the bigger a $\c(g)$ you can obtain for a group, the {\em more amenable } the group will be. In the next section we will make computations for several groups.

\section{Examples of completely positive mappings of $B(H \otimes \ell_2(G))$ into $\cl(\car, \a, G)$}  \label{Exa}
In order to see that the theorems   \ref{sigma} and  \ref{vwep} may offer non-trivial pairs $(\c, \s) $ of a positive definite and non-negative  function $\c$ on $G$ and a unital completely positive mapping $\s$ such that $\s(L_g\Psi(r)) = \c(g)L_g\Psi(r)$   for all pairs $g$ and $ r,$  we have computed some examples. It is worth to remember that the injective von Neumann algebra $\car $ and the action $\a_g$ by $G$ on $\car$ only plays the very  indirect role in the construction of $\c,$ namely that these ingredients may have a strong influence on which point-wise convergent  subnet $\c_{B_{n(\j)}} $ the situation will choose, while $\c_{B_{n(\j)}} $  only depends on $B_{n(\j)}.$
\subsection{ A positive $\xi$}
It is rather obvious that for a unit vector $\xi = (k_g)_{(g \in G)}$ with $k_g \geq 0$ for all $g$ we get that the associated positive definite function $\c_\xi(g) := \langle \l_g \xi, \xi \rangle$ is non-negative.

 \subsection{{\bf The integers  $G = \bz$}} 
In this case we will consider the sequence $(B_n)_{n \in \bn_0} $ of subsets of $\bz$ given by $B_n = \{0, 1, \dots, n\},$ and we find that for any $k$ in $\bz$ 
$$|\{0, \dots, n\}\cap\{k , \dots, k+n\}| = \begin{cases} 0 \text{ if } |k| > n \\
n+1 - |k| \text{ if } |k| \leq n. \end{cases}$$
Then from Definition \ref{chiFin} we get
\begin{equation}
\forall k \in \bz: \quad \c_{B_n}(k) = \begin{cases} \quad 0 \quad \quad \text{ if } |k| > n \\
\frac{n +1 - |k|}{n +1}  \,\text{ if } |k| \leq n.\end{cases}
\end{equation} 
When $x$ is in $C^*_r(\bz) ,$ which is isomorphic to $C(\bt),$ we write \newline
$ x \sim \sum_{k \in \bz} c_k z^k $ and we get 
\begin{equation} \s_{\xi_n} (x)  =  \sum_{k = - n }^{n} \frac{n+1 - |k|}{n+1} c_k z^k,
\end{equation} 
and this is the $n'$th Cesàro mean of the continuous function $x(z).$ In this case the classical theory on Fourier series tells us that for a continuous function  $f$ on $\bt$  the Cesàro means converge uniformly to  $f.$ 
It is clear that $\c_{B_n}(k) \to 1 $ for all $k \in \bz$ and  $n \to \infty,$
so the limit mapping $\s$ is a completely positive idempotent of $B(\ell_2(\bz))$ onto $\cl(\bz).$

\subsection{{\bf A discrete, countable  and amenable group $G$}} \label{aG}
A   discrete, countable and  amenable group contains a F{\o}lner sequence, which means a sequence $(B_n)_{( n \in \bn)}$ such that for each $t$ in $G,$ the number of elements in the symmetric difference $tB_n \Delta B_n$ becomes small when compared to the number of elements in $B_n, $  i.e. for any group element $t,$

$$ \underset{n \to \infty}{\lim}\frac{|tB_n \Delta B_n|}{|B_n|} =  0.$$

In the construction of $\s_{\xi_n}$
any $\l_t$ is an eigen vector and the eigenvalue, $\c_n(t)$ is computed as follows 
\begin{align*}
|B_n| & = |B_n \cap tB_n| + \frac{1}{2}|tB_n \Delta B_n| \\
 \c_n(t) &= \frac{ | B_n \cap tB_n|}{|B_n|} \\
& = 1 - \frac{1}{2} \frac{|B_n \Delta tB_n|}{|B_n| } \\
& \to 1 \text{ for } n \to \infty.
\end{align*} 
Hence $(\s_{\xi_n})$ is a sequence of finite rank, unital completely positive mappings of $B(\ell_2(G))$ into $\cl_r(G)$ which converges point ultra-weakly to a conditional expectati½on onto the group von Neumann algebra. $\cl(G).$

\subsection{{The noncommutative free groups \bf $\baf(k)$}} \label{Fk} In this case we will use the canonical length function on the group given as the word length of a group element written in the alphabet consisting of the $2k$ letters  $\{a_k^{-1}, \dots , a_1^{-1}, a_1, \dots , a_k\}.$ The sequence of finite sets $B_n$ begins with $B_0 = \{e\} ,$ the ball of radius $0,$ but we will also denote it  $S_0,$ which means the sphere of $B_0.$ It  continues such that $B_n$ consists of all group elements  of length at most $n,$ and $S_n$ denotes all the elements of length n. 
First we compute the sizes of $B_n$ and $S_n$ and remrak that $|B_0| = 1,$  $| S_1 | = 2k$ and inductively  \begin{equation}
\forall n \geq 1 : \quad |S_n| = 2k(2k-1)^{(n-1)}. 
\end{equation}
  Hence for $n \geq 1 $ we have
\begin{equation} \label{Bn}
|B_n| = 1 + 2k\sum_{j=0}^{n-1}(2k-1)^j = 1 + 2k\frac{(2k-1)^n -1}{2k -2} = \frac{k(2k-1)^n -1}{k-1}.
\end{equation}
Given a group element $t,$ then in order to compute the eigen value  for $\l_t$ with respect to  $\s_{\xi_n}$ we have to count the number of elements in the set $T_n(t) $ which is defined by $$ T_n(t) := \{h \in B_n :  th \in B_n\}. $$  For us it seems to be the easiest to compute this value based on the knowledge of the length $\ell$ of $t.$
 Since $n$ has to increase in order to get to the limit $\s$ we may as well assume that $n$ is at least twice as big as $\ell. $ We may then split $T_n(t) $ in $\ell + 1$ disjoint subsets, and we will compute the size of each of these and then add the sizes in the end.
  We write $$ T_n(t) = \big(T_n(t)\cap B_{(n-\ell)}\big) \bigcup_{i =0}^{\ell -1} \big(T_n(t)\cap S_{(n-i)}\big).$$
The first set is easy to count since $B_{(n-\ell)} \subseteq T_n(t),$
so \begin{equation} \label{n-ell}
|T_n(t) \cap B_{(n-\ell)} | = \frac{k(2k-1)^{(n-\ell)} -1}{k-1}.
\end{equation}
Let us choose an index $i$, $0 \leq i \leq \ell -1,$ and a group element  $h$ in $S_{(n-i)},$ then if no cancellations occur  the length of $th$ will exceed $n$ so $h$ is not in $T_n(t)$ we must then investigate the possible cancellations which will imply that $h$ is in $T_n(t).$ 
To this end we write $th$ in the alphabet as $t = t_{\ell} \dots t_2t_1 h_1h_2 \dots h_{(n-i)}$ and we see that the reduced word will be in $B_n$ if and only if there are at least $p$ cancellations,
 where $p$  is determined as the least integer such that $\ell + n - i -2p \leq n,$  which means that $p$ is the least integer  such that $p \geq   \frac{\ell - i}{2}.$
  According to Knuth's notation $p = \lceil  \frac{\ell  -i}{2} \rceil.$ 
  The group elements $h $ in $T_n(t) \cap S_{(n-i)} $ are then characterized by the property that the first $p$ letters are fixed and the following $n -i - p$ letters are chosen such that no cancellations occur and the length of the word then is $(n-i). $
   This means that the number of group elements in $T_n(t) \cap S_{(n-i)}$ is $(2k-1)^{( n- i - p) } .$
   The exponent can be reformulated as follows $$n-i - p = (n- \ell)  + ( \ell -i - \lceil \frac{\ell - i}{2}\rceil = n - \ell  + \lfloor   \frac{\ell - i}{2}\rfloor,$$ where $\lfloor x \rfloor$ is the largest integer smaller than or equal to  $x,$ and then   \begin{equation} \label{n-i}
   |T_n(t) \cap S_{(n-i)} | = (2k-1)^{(n-\ell + \lfloor \frac{\ell - i}{2})\rfloor)}.
\end{equation} 
The    use of the floor function $\lfloor x \rfloor$ in (\ref{n-i}) implies that computation of the value $\c(t)$  will be different depending on whether the length $ \ell = \ell(t)$ is even or odd. Let us first assume that $\ell$ is even, so  $\ell = 2m$ for an $m $ in $\bn_0.$ 
Then for typographical reasons define $q : = (2k-1)$ and we find 
  \begin{align}  &\, \, \, \, \,\,  \sum_{i = 0}^{2m - 1} (2k-1)^{(n-2m + \lfloor \frac{2m - i}
  {2}\rfloor)} =
  \sum_{i = 0}^{2m - 1} q^{(n-2m + \lfloor \frac{2m - i}
  {2}\rfloor)}\\
  \notag & = q^{(n -m)} + 2q^{(n -m+1)} + 
 \dots + 2q^{(n -2m+1)} +q^{(n -2m)} \\
 \notag & = \frac{1}{q -1}\bigg( q^{(n-m+1)} + 
q^{(n-m)} - q^{(n-2m+1)} -q^{(n-2m)}\bigg).
 \end{align}
When expressed in $q$ the equation (\ref{n-ell}) tells that \begin{align} \label{Bnq}
|B_{n -2m} | &= (q^{(n-2m+1)} + q^{(n-2m)} -2)/(q-1) \\
|B_{n} | &= (q^{(n+1)} + q^{(n)} -2)/(q-1)\end{align}  so by (\ref{Bn}) \begin{align}  \notag
\text{ for } \ell(t) = 2m: |T_n(t)| &= \frac{q^{(n-m+1)} + q^{(n-m)} -2} {q-1}  & \\ \label{2m}
\underset{n \to \infty}{\lim} \frac{|T_n(t)|}{|B_n| } &= \underset{n \to \infty}{\lim} \frac{q^{(n-m+1)} + q^{(n-m)} -2}{q^{(n+1)} +q^n -2}\\  \notag
&= q^{-m}\\ 
\text{ for } \ell(t) = 2m: \c(t) & = \frac{1}{(2k-1)^m}.
\end{align}
For $\ell(t) = 2m+1$ with $m \in\bn_0$ a similar computation gives 
\begin{align}  \notag
\text{ for } \ell(t) = 2m+1: |T_n(t)| &= \frac{2q^{(n-m)} -2}{q-1}\\ \label{2m+1}
\underset{n \to \infty}{\lim} \frac{|T_n(t)|}{|B_n| } &= \underset{n \to \infty}{\lim} \frac{2q^{(n-m)} -2}{q^{(n+1)} +q^n -2}\\  \notag
&= \frac{2}{q+1}q^{-m}\\ 
\text{ for } \ell(t) = 2m+1: \c(t) & = \frac{1}{k(2k-1)^m}.
\end{align}
One may wonder if there is an easy explanation of the relatively nice formulas (\ref{2m}) and (\ref{2m+1}) ?
It is worth to try to compare the function $\c$ to Haagerup's multiplier \cite{Ha} $\f_\eps( t) := e^{-\eps\ell(t)}$ and for $\eps = \frac{1}{2}\log(2k-1)$ it fits if the length is even, and the ratio between the two functions is about $\sqrt{k/2}$ when the length is odd, so the 2 multipliers have very similar decay properties.

\subsection{A discrete, countable and   finitely generated group} \label{FinGen}  
 We consider a general countable discrete group $G$ which is generated by a finite set $S$ such that $e \notin S$ and $S^{-1} = S.$ The generators induce a natural length function $\ell(g)$  on $G$ defined by word length in the number of generators. As for the free non abelian groups above, we will let the sequence of subsets $B_n$ be defined  by $B_0 = \{e\},$ and for $n \in \bn, $ $B_n := \{g \in G:   \ell(g) \leq n\}.$ Then in order to study a point ultraweak limit $\s$   of the mappings $\s_{\xi_n}$ we can give a lower bound for the positive definite function $\c(g)$ which gives the eigen vaules of the eigenvectors $\l_g$ for $\s.$ For an amenable group the example above shows that it is possible to obtain  $\c(g) = 1.$ The worst case would be if for some group $G$ the only positive definite function $\c$  our construction can provide is the function given by $\c(g) = 0$ for $g \neq e$ and $\c(e) = 1.$ If the group $ G$ has a finite symmetric set with $k$ generators, then we can find a completely positive mapping $\s$ such that the associated positive definite function satisfies \begin{equation} \label{chi}
\forall g \in G : \c(g) \geq \frac{k-2}{(k-1)^{\ell(g) +1} -1}.
\end{equation}
When compared to the results above for the free non abelian groups we see that the decay of $\c(g),$ there, for approximately half the numbers of generators,  is of the order $(k-1)^{\ell(g)/2},$ but it is somehow surprising that no information on the group except being countable, discrete and having a finite, symmetric generating set of size $k$  gives existence of a pair $(\c, \s)$  with a decay rate
for $\c(g)$ which is only the square of the one from  a non abelian free group with approximately half the number of generators.  The computation of the lower bound is quite simple. 
Let $g$ in $G$ have length $\ell,$ then for $n \geq \ell$ we want to compute a lower bound for $\c_n(g)  =  |B_n \cap g B_n|/ |B_n| .$ First we remark that \begin{equation} \label{nom} B_n \cap gB_n \supset B_{(n - \ell)}.
\end{equation} 
Secondly we define the spheres $S_m = \{g \in G : \ell(g) = m\}$ and remark, as above for the free groups, that $B_n = B_{(n - \ell)} \cup S_{(n - \ell +1)} \cup\dots \cup S_n.$ It is easy to see that $|S_{(j+1)} | \leq (k-1)|S_j| \leq (k-1) |B_j|.$  \begin{equation} \label{denom}
|B_n|  \leq |B_{(n - \ell)}|\big( 1 + (k-1) + \dots + (k-1)^{\ell}\big) = |B_{(n - \ell) }|\frac{(k-1)^{(\ell+1)} -1}{k-2}. 
\end{equation}
Hence for any $g$ in $G$ with $\ell(g) \leq n$ 
\begin{equation}
 \frac{|B_n \cap g B_n|}{|B_n|}  \geq \frac{|B_{( n - \ell(g))}|(k-2)} {|B_{(n- \ell(g))}|( (k-1)^{(\ell+1)} -1)} = \frac{(k-2)} { (k-1)^{(\ell+1)} -1},
\end{equation}
and it follows that the limiting positive definite function$\c(g)$ satisfies $\c(g) \geq \frac{(k-2)} { (k-1)^{(\ell(g)+1)} -1}.$

\subsection{A discrete countable  group} \label{Countab} 

Now,  we just assume that $G$ is countable, and  then  there are lots of unit vectors $\xi$ in $\ell_2(G)$ which are strictly positive so the corresponding positive definite function $(\xi\star \xi^*)(g^{-1}) = \langle \l_g \xi, \xi\rangle $ is also strictly positive with value 1 at the unit $e.$ By taking convex combinations and point-wise limits of such positive definite and strictly positive functions we can obtain a lot of positive definite functions $\c$ which have the property that there exists a unital completely positive and $\Psi(\car)$ bimodular extension $\s$  of the Hadamard multiplier $M^\c$ acting on $\cl(\car, \a, G)$ to a mapping from $B(H \otimes \ell_2(G))$ to $\cl(\car, \a, G)$ The question is, which positive definite functions on the group have this property ?

\subsection{ Remark   }
Uffe Haagerup shows in \cite{H1}
that for the free non abelian group of $n$ generators with length function $\ell$ and any $\eps > 0,$ the function $\c(g) := e^{-\eps \ell(g)}$ is a positive definite function on the group, and for $\eps \to 0$ it  is an approximate unit in $C^*_r(G) $ with respect to  Hadamard  multiplication. 
It would be interesting to see, if for some other  countable groups there exists a positive definite  function $\c$ which  has the property that for any $\eps$ in $]0, 1]$  the function  $\c^\eps(g) := (\c(g))^\eps$ is positive definite and for some $\eps_0$  the associated Hadamard multiplier extends to a completely positive mapping on $B(H \otimes \ell_2(G)).$  By Schoenberg's Theorem \cite{BC}, \cite{H1}  the exponential condition is equivalent to  showing that the function $L(g) :=- \log(\c(g)) $ is negative definite, but it seems to be difficult to find conditions which ensures both negative definiteness and an extension possibility.

\section{Unbounded expectations to $\cl(\car, \a, G).$}
 \label{expect} 

The  example of Subsection \ref{Countab} shows that for any discrete  countable  group $ G$ there exist many  linear unital completely positive and $\car-$bimodular  mappings $\s$ of $B(H \otimes \ell_2(G))$ to $\cl(\car, \a, G)$ and  positive definite and strictly positive functions  $\c$ on $G$ such that  $\s(L_g\Psi(r)) = \c(g) L_g\Psi(r)$ for any $g$ in $G$ and $r$ in $\car.$ 
With this setup and the results of the previous section  we can make the following definition.
\begin{definition} Let $\car$ be a von Neumann algebra on  a Hilbert space $H$ and $G$  a countable  discrete group acting by *-automorphisms $\a_g$ on $\car. $ An unbounded expectation of $B(H \otimes \ell_2(G))$ to $\cl(\car, \a, G)$ is a pair $(\c, \s)$ such that $\s $ is a linear unital completely positive and $\car-$bimodular  mapping  of $B(H \otimes \ell_2(G))$ to $\cl( \car, \a, G)$  and $\c$ is a positive definite and strictly positive   function    such that 
\begin{itemize} 
\item[(i)]$\forall g \in G \, \forall r \in \car: \quad \s(L_g\Psi(r)) = \c(g)L_g\Psi(r).$
\item[(ii)] $ \forall x
\in B(H \otimes \ell_2(G)\, \forall g \in G: \quad \|\mathrm{Diag}\big(L_{g^{-1}}\s(x)\big)\| \leq \c(g)\|x\|.$
\end{itemize}
 \end{definition} 
Remark, that $\c(g) > 0$ for all $g$ so $\c^{-1}(g) : = 1/\c(g)$ is a positive function on $G$ and we may define a possibly unbounded Hadamard multiplier $M^{\c^{-1 }}$ on $\cl(\car,\a, G).$ Then for  $\bd : = \{x \in B(H \otimes \ell_2(G)): M^{\c^{-1}}\star_h \s(x) \text{ is bounded}\}$   the mapping 
$\Pi$ defined on $\bd$ and given by $\Pi(x) := M^{\c^{-1}} \star_h \s(x) $ is an idempotent of the ultra-weakly dense subspace $\bd $ of $B(H \otimes \ell_2(G))$  onto $\cl(\car, \a, G).$

The examples from Section  \ref{Exa} give the following theorem. 

\begin{theorem}
Let $\car$ be an injective von Neumann algebra on a Hilbert space $H$ and $\a_g$ an action of a discrete countable  group $G$ on $\car$ by *-automorphisms. 
There exists an unbounded expectation $(\c, \s)$ of $B(H \otimes \ell_2(G))$ onto $\cl(\car, \a, G).$ 
\end{theorem}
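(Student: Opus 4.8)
The plan is to produce the pair $(\c,\s)$ from a single, carefully chosen unit vector rather than by any limiting procedure. Since $G$ is countable I would first fix a unit vector $\xi=(k_g)_{g\in G}$ in $\ell_2(G)$ all of whose coordinates are strictly positive reals, $k_g>0$; such a vector exists (enumerate $G$ and take a strictly positive normalized $\ell_2$ sequence), as already noted in Subsection \ref{Countab}. Setting $\c(g):=\langle\l_g\xi,\xi\rangle=\sum_{h\in G}k_h\,\overline{k_{gh}}=\sum_{h\in G}k_hk_{gh}$, this $\c$ is positive definite (a diagonal matrix coefficient of the regular representation) with $\c(e)=1$, and it is \emph{strictly} positive since every summand $k_hk_{gh}$ is strictly positive. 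Applying Theorem \ref{sigma} to this $\xi$ then furnishes $\s:=\s_\xi$, a unital, completely positive, $\Psi(\car)$-bimodular map of $B(H\otimes\ell_2(G))$ into $\cl(\car,\a,G)$ for which every $L_g\Psi(r)$ is an eigenvector with eigenvalue $\c(g)$; this delivers property (i) of the definition together with all the structural requirements on $\s$ at once.

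The one genuinely substantive point is property (ii), the norm estimate on the Fourier coefficients of $\s(x)$. I would extract it from the closed form (\ref{sinfty}) for $\s_\xi$. Writing $\s_\xi(x)\sim\sum_{t\in G}L_t\Psi(r_t)$ and matching coefficients shows
$$r_t=\sum_{h\in G}\overline{k_{th}}\,k_h\,\a_h\big(\pi(x_{(th,h)})\big)\in\car,$$
and, since $\mathrm{Diag}(L_u\Psi(r))$ vanishes unless $u=e$, one obtains $\mathrm{Diag}\big(L_{t^{-1}}\s(x)\big)=\Psi(r_t)$. Because $\Psi$ is a faithful normal representation it is isometric, so $\|\mathrm{Diag}(L_{t^{-1}}\s(x))\|=\|r_t\|$.

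To bound $\|r_t\|$ I would use that each $\a_h$ is a $*$-automorphism (hence isometric), that $\pi$ is unital completely positive (hence contractive), and that every matrix entry satisfies $\|x_{(th,h)}\|\le\|x\|$; the triangle inequality then yields
$$\|r_t\|\le\Big(\sum_{h\in G}|k_{th}|\,|k_h|\Big)\|x\|=\Big(\sum_{h\in G}k_{th}k_h\Big)\|x\|=\c(t)\,\|x\|,$$
where the middle equality is exactly the place where positivity of the coordinates $k_g$ is used: it converts the triangle-inequality bound $\sum_h|k_{th}||k_h|$ into the honest value $\c(t)=\sum_h k_hk_{th}$. The sum converges absolutely, being dominated by $\sum_h|k_{th}||k_h|\le\|\xi\|^2=1$ via Cauchy--Schwarz, so $r_t$ is a well-defined element of $\car$. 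This establishes (ii) and completes the verification that $(\c,\s)$ is an unbounded expectation.

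I expect the main obstacle to be the bookkeeping in the coefficient computation, namely correctly reading off $r_t$ from (\ref{sinfty}) and confirming $\mathrm{Diag}(L_{t^{-1}}\s(x))=\Psi(r_t)$. The conceptual heart, however, is the observation that strict positivity of $\xi$ is precisely what makes the triangle-inequality estimate saturate to $\c(t)$: had $\xi$ carried signs or phases, the bound $\sum_h|k_{th}||k_h|$ would in general strictly exceed $|\c(t)|$ and (ii) could fail. Note that, in contrast to Theorem \ref{vwep}, no limit is taken here, precisely so that strict positivity of $\c$ is preserved.
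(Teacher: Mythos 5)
Your proposal is correct and takes essentially the same route as the paper's proof: fix a unit vector $\xi=(k_g)$ with all $k_g>0$, invoke Theorem \ref{sigma} for the structural properties and for item (i), and obtain item (ii) from the expansion (\ref{sinfty}) by the triangle inequality, where strict positivity of the coordinates makes the bound $\sum_{h}k_{th}k_h$ equal exactly $\c(t)=\langle\l_t\xi,\xi\rangle$. Your explicit coefficient-matching step, identifying $\mathrm{Diag}\big(L_{t^{-1}}\s(x)\big)=\Psi(r_t)$ and using that $\Psi$ is isometric, simply spells out what the paper's proof leaves implicit.
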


\begin{proof}
We will show the result for a unit vector $\xi = (k_g)_{(g \in G)},$ in $\ell_2(G)$ such that for each $g$ we have $k_g > 0.$ and the pair $(\c_\xi, \s_\xi),$ which was constructed in Theorem \ref{sigma}. It is easy to check that the set of unbounded conditional expectations is convex, so we may then obtain  a good collection of examples. For the pair $(\c_\xi, \s_\xi)$ most of the defining properties for an unbounded expectation follows quite easily from the construction of $\s_\xi.$ 

 The only thing which is not obvious is that the condition (ii) in the definition of an unbounded expectation is fulfilled. To see that, we will go back to the beginning of Section 3 and recall the construction of $\s_{\xi}$ from equation (\ref{sinfty}), but remember $k_g > 0$ now

\begin{equation}  \label{sinfty2} \notag
\s_\xi(x) := \sum_{t,h \in G}L_{t}\Psi\big(\a_h(\pi(x_{(th,h)})\big)
k_{th}k_h. 
\end{equation}

For each $t$ and $h$ in $G$ we have $\|L_{t}\Psi\big(\a_h(\pi(x_{(th,h)})\big)
\| \leq \|x\|$ and the sum of positive reals $\sum_{h\in G} k_{th}k_h = \langle \l_t \xi, \xi\rangle = \c_\xi(t),$ is in the interval $]0, 1], $ and the item (ii) follows. 
\end{proof} 

We find that it is worth to remark that the last lines, in the proof above, show that  for each $t$ in $G$ we may define a contraction $\Phi_t : B( H \otimes \ell^2(G)) \to \Psi(\car)$ by 
\begin{equation} \label{Fitn} \Phi_t(x) := \c_\xi^{-1}(t)\sum_{h \in G}\Psi\big(\a_h(\pi(x_{(th,h)})\big)
k_{th}k_h, \end{equation}
and that this family of contractions satisfies 
 \begin{equation} \label{Fit}\forall x \in B(H \otimes \ell_2(G)): \quad \s(x) \sim \sum_{t \in G}   \c(t)L_t \Phi_t(x).\end{equation}
 The reason why condition (ii) is included is that we get the impression that an unbounded expectations should have this form with a positive definite function which is also strictly positive, 

\subsection{The algebra $\ell_\infty(G \times G, B(H))$}
We mentioned in front of Proposition \ref{fi} that the Hadamard product in $\cl(\car, \a, G)$  directs us to look at the algebra $\ell_\infty(G, \car)$ with the point-wise product and the equation (\ref{Fit}) supports  this. If we return to equation (\ref{Fitn}) it is easy to check that the domain of definition for  $\Phi_t$ really is all of $\ell_\infty(G \times G, B(H)),$ hence we may  extend the domain of definition for  $\Phi_t$ to be a contraction of $\ell_\infty(G \times G, B(H))$ to $\Psi(\car), $ and in turn we can define a contraction mapping $\Phi$ of $\ell_\infty(G \times G, B(H))$ to $\ell_\infty(G, \car)$ by 
\begin{equation} \notag
\Phi(x) := \big(\Psi^{-1}(\Phi_t(x))\big)_{(t\in G)}.
\end{equation}
If we look at the matrix representations of the operators $L_t := \sum_g I \otimes e_{(tg,g)}$ and $\Psi(r) = \sum_g  \a_{g^{-1}}(r) \otimes e_{(g,g)}$ it is easy to see that inside the structure of $\ell_\infty (G \times G, B(H))$ we can define an isometry $\Theta$  of $\ell_\infty(G, \car)$ into $\ell_\infty ( G \times G, \car)$ \begin{align*} \label{Th}
\forall  R &= (r_g)_{(g \in G)} \in \ell_\infty(G, \car) \\ \Theta(R) :&= \sum_{g \in G} L_g \Psi(r_g) = \sum_{i,j\in G} \a_{j^{-1}}(r_{ij^{-1}})\otimes e_{(i,j)} \in \ell_\infty(G \times G, \car).
\end{align*}
We will define $\ct$ as the closed subspace of $\ell_\infty(G \times G, \car),$ which is the range of $\Theta, $ and then $\ct$ is a subalgebra of $\ell_\infty(G \times G , B(H)) $ for which there exists an isometric isomorphism of the algebra  $\ell_\infty(G, \car) $ onto this algebra. We will now show that $\ct$  is the image of an idempotent $\hat \Pi$ from $\ell_\infty( G \times G, B(H))$ of norm 1.
 From the constructions above it will follow that we may use $\hat \Pi = \Theta \circ \Phi.$ In some sense it seems natural to look at the operator algebra $\ell_\infty(G, \car) $ as the complemented subalgebra $\ct$ of $\ell_\infty(G\times G, \car).$

 We can also define $M^\c$ as a bounded linear mapping on $\ell_\infty ( G \times G, B(H))$ given by Hadamard multiplication -  the function $ x(g, h) $ is mapped to the function $ \c(gh^{-1})x(g,h)$ - and we get that the mapping $\Sigma $ defined as $\Sigma := M^\c\star_h \hat \Pi$  is a completely positive mapping  on $\ell_\infty(G \times G, B(H))$ in the following sense. We look at a  function $x(g,h)$ in $\ell_\infty( G \times G, B(H))$ as an infinite block matrix with elements in $B(H),$ and we will say that $x$ is positive if all its principal and finite sub-matrices are positive.   
 
 The comment above is somehow  suggesting itself to be part of this article, but we have no use for it, at least at this point. It should be noticed that the positivity property  of the idempotent is the {\em non obvious } part. It is quite easy  to obtain an idempotent of norm 1, because $\car$ is injective.  
\subsection{Comments}
Suppose $(\c, \s) $ is an unbounded expectation to \newline $\cl(\car, \a, G),$ then there are some  immediate questions:  

\begin{itemize} 
\item[(i)] What can you say about the domain of definition $\bd$  for the unbounded idempotent $\Pi,$  except that it contains $\cl(\car, \a, G)$ ?

\item[(ii)] For which groups $G,$ except amenable ones,  is it possible to find an unbounded expectation $(\c, \s)$ such that $\c$  is constant on conjugacy classes? 
\item[(iii)]
The construction of an unbounded expectation may be seen as an extension problem.  Suppose $\c$ is a positive definite and strictly positive  function on the group $G$ with $\c(e) = 1,$ then an unbounded expectation may be obtained if it is possible to extend the Hadamard multiplier  $M^\c$ on $\cl(\car, \a, G)$  to  a completely positive $\Psi(\car) $ bimodular mapping of $B(H \otimes \ell_2(G))$ into $\cl(\car, \a, G).$  
\item[(iv)] In Haagerup's article \cite{Ha}, he shows that for a free non abelian group with $k$ generators and any $\eps > 0$ the function  $\c_\eps(g) : = e^{-\eps \ell(g)}$ is positive definite, strictly positive with $\c_\eps(e) = 1.$  We mentioned at the end of Section \ref{Fk} that for $\eps =\frac{1}{2}\log(2k-1)$ Haagerup's $\c_\eps(g)$ function is very closely related to the positive definite function constructed in that subsection. This raises the natural question. For which $\eps > 0 $ is $M^{\c_\eps}$ part of an unbounded expectation onto $\cl(\baf_k)$?  Could it be that the lower bound for these $\eps'$s is related to the number $k$ of generators ?
   
\item[(v)] It follows from the constructions in this article that for an operator $x$ in $\cl(\car, \a, G),$ $ x \sim \sum_g L_g\Psi(r_g)$  the diagonals - $\mathrm{Diag}(L_g^*x) = \Psi(r_g)$ - play a fundamental role. We have wondered if this might leave room for an application of the positive answer \cite{MSS} to the Kadison-Singer question ?

\end{itemize} 

With respect to item (i), it is quite easy to see that any operator $ x$ in $B(H \otimes \ell_2(G))$ which is supported on a diagonal - meaning - $ x$ is of the form $x = \sum_g x_{(tg,g)} \otimes e_{(tg,g)}$ must be in $\bd,$ so it follows that $\bd$ is ultra-weakly dense in $B(H\otimes \ell_2(G)).$  

We find that the rest of the  questions  are most relevant in the situation where $\car = \bc I_H$ because the questions really deal with the structure of the positive definite functions on the group $G.$ 

Since the amenable groups give the largest possible positive definite function $\c, $  namely $\c(g) =1 $ for all $g,$  we have got the impression that the structure of the point-wise ordered set of positive and  positive definite functions associated to unbounded expectations may contain some valuable information on a given discrete countable group $G.$

With respect to item (ii) we have noticed  that for the unbounded expectation $(\c_\x, \s_\xi)$ which is constructed from a unit vector $\xi,$ we can translate $\xi$ by some $\l_t,$ then  $\l_t\xi$    gives rise to a pair $( \c_{\l_t\xi}, \s_{\l_t\xi} )$ such that $\c_{\l_t\xi} $ is conjugate to $\c_\xi.$

For item (iii), we would like to remark, that Paulsen \cite{Pa} has obtained a number of results based on extensions of certain completely positive mappings, and we hope that some of the methods used in the study of operator spaces might be applicable here.  

It is nearly obvious, that the set of  positive definite functions on $G$  with extendible multipliers  is  a convex cone, which is stable for the Hadamard, or point-wise  multiplication, product by any positive definite function. It is also stable under conjugation, so it has a structure similar to that of ideals in a C*-algebra. 

We have nothing to report on item (v), but the {\em paving version} of the Kadison-Singer question suggests to look at cut downs to diagonal block forms. 
Elements in the main diagonal of $\cl(\car, \a, G)$ have the form $\sum_g \a_{g^{-1}}(r) \otimes e_{(g,g)},$ so some  properties of a cut down to a block diagonal form may be computable.

\end{document}